\numberwithin{equation}{section}
\def\pmod #1{\ ({\rm{mod}}\ #1)}
\newtheorem{theorem*}{Theorem}
\newtheorem{lemma*}{Lemma}
\theoremstyle{plain}
\newtheorem{theorem}{Theorem}
\theoremstyle{definition}
\begin{document}

\title
[{On a conjecture of R. M. Murty and V. K. Murty}] {On a conjecture of R. M. Murty and V. K. Murty}

\author
[Yuchen Ding] {Yuchen Ding}

\address{(Yuchen Ding) School of Mathematical Science,  Yangzhou University, Yangzhou 225002, People's Republic of China}
\email{ycding@yzu.edu.cn}

\keywords{Primes; Bombieri--Vinogradov theorem.} \subjclass[2010]{Primary 11A41.}

\begin{abstract}
Let $\omega^*(n)$ be the number of primes $p$ such that $p-1$ divides $n$. Recently, R. M. Murty and V. K. Murty proved that
$$x(\log\log x)^3\ll\sum_{n\le x}\omega^*(n)^2\ll x\log x.$$
They further conjectured that there is some positive constant $C$ such that
$$\sum_{n\le x}\omega^*(n)^2\sim Cx\log x$$
as $x\rightarrow \infty$. In this short note, we give the correct order of the sum by showing that $$\sum_{n\le x}\omega^*(n)^2\asymp  x\log x.$$
\end{abstract}
\maketitle

\baselineskip 18pt

Let $\omega(n)$ be the number of distinct prime divisors of $n$. In about one hundred years ago, Hardy and Ramanujan \cite{HR} found out that $\omega(n)$ has normal order $\log\log n$ which means that for all most all integers $n$ we have $\omega(n)\sim \log\log n$. Later, Tur\'an \cite{Tu} provided a quite elegantly simplified proof by establishing $$\sum_{n\le x}(\omega(n)-\log\log n)^2\ll x\log\log x.$$
In 1955, Prachar \cite{Pr} considered a variant arithmetic function of $\omega$. Let $\omega^*(n)$ be the number of primes $p$ such that $p-1$ divides $n$. Prachar proved that
$$\sum_{n\le x}\omega^*(n)=x\log\log x+Bx+O(x/\log x)$$
and
$$\sum_{n\le x}\omega^*(n)^2=O\left(x(\log x)^2\right),$$
where $B$ is a constant. Motived by Prachar's work, Erd\H os and Prachar \cite{EP} proved that the number of pairs of primes $p$ and $q$ so that the least common multiple $[p-1,q-1]\le x$ is bounded by $O(x\log\log x)$. Following a remark of Erd\H os and Prachar, R. M. Murty and V. K. Murty \cite{MM} improved this to $O(x)$. By this improvement, they reached the nice bounds
$$x(\log\log x)^3\ll\sum_{n\le x}\omega^*(n)^2\ll x\log x.$$
With these in hands, R. M. Murty and V. K. Murty conjectured that there is some positive constant $C$ such that
$$\sum_{n\le x}\omega^*(n)^2\sim Cx\log x$$
as $x\rightarrow \infty$.
In this note, the author shall give a slight improvement of the result due to R. M. Murty and V. K. Murty towards the correct direction of their conjecture.
\begin{theorem}\label{thm1} There are two absolute constants $a_1$ and $a_2$ such that $$a_1x\log x\le \sum_{n\le x}\omega^*(n)^2\le a_2x\log x.$$
\end{theorem}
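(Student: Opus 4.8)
The plan is to read off the upper bound $\sum_{n\le x}\omega^*(n)^2\ll x\log x$ directly from the estimate already established by R. M. Murty and V. K. Murty, so that the entire difficulty lies in the matching lower bound $\sum_{n\le x}\omega^*(n)^2\gg x\log x$, which is what improves their $x(\log\log x)^3$. First I would expand the square and interchange summation. Since $\omega^*(n)=\#\{p:(p-1)\mid n\}$, and $(p-1)\mid n$, $(q-1)\mid n$ together mean $[p-1,q-1]\mid n$, one has
\[
\sum_{n\le x}\omega^*(n)^2=\sum_{p,q}\Big\lfloor\frac{x}{[p-1,q-1]}\Big\rfloor,
\]
the sum running over ordered pairs of primes. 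Restricting to $p,q\le\sqrt x$ (all terms being nonnegative) forces $[p-1,q-1]\le(p-1)(q-1)<x$, so each floor exceeds $\tfrac12\cdot x/[p-1,q-1]$, giving
\[
\sum_{n\le x}\omega^*(n)^2\ge\frac{x}{2}\sum_{p,q\le\sqrt x}\frac{1}{[p-1,q-1]}.
\]
Everything thus reduces to showing $\Sigma:=\sum_{p,q\le\sqrt x}[p-1,q-1]^{-1}\gg\log x$; notably this needs no input on the count of pairs.

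Next I would linearise the least common multiple through the identity $[m,n]=mn/\gcd(m,n)$ together with $\gcd(m,n)=\sum_{d\mid m,\,d\mid n}\varphi(d)$, and then discard all divisors $d>D$ (legitimate, as every summand is positive), where $D:=x^{1/20}$. This yields
\[
\Sigma=\sum_{p,q\le\sqrt x}\frac{\gcd(p-1,q-1)}{(p-1)(q-1)}\ge\sum_{d\le D}\varphi(d)\,T(d)^2,\qquad T(d):=\sum_{\substack{p\le\sqrt x\\ p\equiv1\pmod d}}\frac{1}{p-1}.
\]
The purpose of the truncation is that $D=(\sqrt x)^{1/10}$ sits comfortably below the Bombieri--Vinogradov level $(\sqrt x)^{1/2-\varepsilon}$, so that $T(d)$ can be handled on average over $d\le D$.

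The heart of the matter is the lower bound $\sum_{d\le D}\varphi(d)T(d)^2\gg\log x$. Here I would retain only the larger primes, $T(d)\ge\sum_{x^{3/20}<p\le\sqrt x,\,p\equiv1\pmod d}(p-1)^{-1}$, and evaluate this truncated sum by partial summation after inserting $\pi(t;d,1)=\mathrm{li}(t)/\varphi(d)+R(t,d)$. The main term integrates to $\varphi(d)^{-1}\int_{x^{3/20}}^{\sqrt x}\frac{dt}{(t-1)\log t}=c_0/\varphi(d)$ with $c_0=\log(10/3)>0$ a genuine positive constant, so that
\[
\varphi(d)\,T(d)^2\ge\frac{c_0^2}{\varphi(d)}+2c_0\,\mathrm{Err}(d),
\]
and summing the first term over $d\le D$ produces $c_0^2\sum_{d\le D}\varphi(d)^{-1}\sim c_0^2 A\log D\gg\log x$, using the classical $\sum_{d\le D}\varphi(d)^{-1}\sim A\log D$.

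It remains to show the aggregated error $\sum_{d\le D}|\mathrm{Err}(d)|$ is negligible, and this is the step I expect to be the main obstacle. After partial summation $\mathrm{Err}(d)$ is controlled by $\int_{x^{3/20}}^{\sqrt x}\frac{|R(t,d)|}{(t-1)^2}\,dt$ plus a harmless boundary term; summing over $d\le D$ and inverting the order gives $\int_{x^{3/20}}^{\sqrt x}(t-1)^{-2}\big(\sum_{d\le D}|R(t,d)|\big)\,dt$. Applying Bombieri--Vinogradov \emph{at each level $t$} (valid since $D\le t^{1/3}\le t^{1/2-\varepsilon}$ throughout $t\ge x^{3/20}$) bounds the inner sum by $t(\log t)^{-A}$, whence the whole integral is $O((\log x)^{-(A-1)})=o(1)$. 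The delicate point—and the reason for discarding the small primes at the outset—is that the naive main term $\mathrm{li}(t)/\varphi(d)$ is meaningless for $t<d$, and if one integrated all the way down the resulting error would itself be of size $\asymp\log x$, swamping the main term; cutting the range at $t\ge x^{3/20}$ simultaneously preserves a positive constant $c_0$ and confines the $t$-integration to the regime where the level-$t$ bound makes $\int(t-1)^{-2}\sum_{d\le D}|R(t,d)|\,dt$ converge. Combining $\Sigma\gg\log x$ with the reduction and the quoted upper bound then delivers the theorem.
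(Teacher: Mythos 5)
Your proposal is correct, and it shares its skeleton with the paper's proof: both reduce the theorem to the lower bound $\sum_{d\le D}\varphi(d)T(d)^2\gg\log x$, where $T(d)$ sums $1/(p-1)$ over primes $p\equiv 1\pmod{d}$ restricted to a range of large primes, both keep $d$ below the Bombieri--Vinogradov level, and both conclude from $\sum_{d\le D}1/\varphi(d)\gg\log D\gg\log x$. The genuine differences are two. First, you derive the starting identity yourself (expanding the square into $\sum_{p,q}\lfloor x/[p-1,q-1]\rfloor$, then using $\gcd(m,n)=\sum_{d\mid m,\,d\mid n}\varphi(d)$ and $\lfloor u\rfloor\ge u/2$ for $u\ge1$), which makes the note self-contained, whereas the paper cites equation (4.10) of Murty--Murty, i.e.\ the same identity in asymptotic form with an $O(x)$ error. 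Second, and more substantially, the deployment of Bombieri--Vinogradov differs. The paper needs the pointwise bound $\pi(y;d,1)>\mathrm{li}\,y/(2\varphi(d))$ to hold for \emph{all} $y\in[x^{3/4},x]$ simultaneously; since BV controls $\max_{y\le z}$ only for each fixed $z$, the paper runs a dichotomy over dyadic values of $z$ with a union bound on exceptional moduli, organized in dyadic blocks $Q_j<d\le 2Q_j$, and then sums $1/\varphi(d)$ over the surviving non-exceptional $d$. You avoid exceptional sets entirely by keeping the error linear: because $T(d)\ge0$, the expansion $\varphi(d)T(d)^2\ge c_0^2/\varphi(d)+2c_0\,\mathrm{Err}(d)$ is valid whatever the sign of $\mathrm{Err}(d)$, and applying BV at each level $t\ge x^{3/20}$ (legitimate since $D=x^{1/20}\le t^{1/3}$) gives $\sum_{d\le D}|\mathrm{Err}(d)|=o(1)$. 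Your route trades the paper's stronger intermediate statement (pointwise control of $\pi(y;d,1)$ for almost all moduli, which is reusable information) for a leaner argument with no exceptional-set or dichotomy bookkeeping, and it yields an explicit constant more directly. One small imprecision: $c_0$ equals $\log(10/3)+o(1)$ rather than exactly $\log(10/3)$, and the cross term should carry the exact value of the integral; since that value is independent of $d$ and bounded between absolute positive constants, nothing in the argument is affected.
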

\begin{proof}
We only need to prove the lower bound as the upper bound is displayed by R. M. Murty and V. K. Murty.
Throughout our proof, the number $x$ is sufficiently large.
From the paper of R. M. Murty and V. K. Murty \cite[equation (4.10)]{MM}, we have
\begin{align}\label{eq1}
\sum_{n\le x}\omega^*(n)^2=x\sum_{d\le x}\varphi(d)\bigg(\sum_{\substack{p\le x\\p\equiv 1\!\!\!\pmod{d}}}\frac{1}{p-1}\bigg)^2+O(x).
\end{align}
Integrating by parts gives
\begin{align}\label{eq2}
\sum_{\substack{p\le x\\p\equiv 1\!\!\!\pmod{d}}}\frac{1}{p}=\frac{\pi(x;d,1)}{x}+\int_{2}^{x}\frac{\pi(t;d,1)}{t^2}dt\ge \int_{x^{3/4}}^{x}\frac{\pi(t;d,1)}{t^2}dt,
\end{align}
where $\pi(t;d,1)$ is the number of primes $p\equiv 1\pmod{d}$ up to $t$.
Thus, from equations (\ref{eq1}) and (\ref{eq2}) we obtain
\begin{align}\label{eq3}
\sum_{n\le x}\omega^*(n)^2\ge x\sum_{d\le x^{1/3}}\varphi(d)\left(\int_{x^{3/4}}^{x}\frac{\pi(t;d,1)}{t^2}dt\right)^2+O(x).
\end{align}
For any integer $0\le j\le \left\lfloor \frac{\log x}{13\log 2}\right\rfloor$, let $Q_j=2^jx^{1/4}$. Then $Q_j<x^{1/3}$ for all integers $j$.
From a weak form of the Bombieri--Vinogradov theorem (see for example \cite{Da}), we have
\begin{align*}
\sum_{Q_j<d\le 2Q_j}\max_{y\le z}\left|\pi(y;d,1)-\frac{\text{li}~y}{\varphi(d)}\right|\ll \frac{z}{(\log z)^5},
\end{align*}
for any $0\le j\le \left\lfloor \frac{\log x}{13\log 2}\right\rfloor$ and $x^{3/4}\le z\le x$, where the implied constant is absolute. It follows immediately that
\begin{align}\label{eq4}
\max_{y\le z}\left|\pi(y;d,1)-\frac{\text{li}~y}{\varphi(d)}\right|<\frac{\text{li}~z}{\varphi(d)\log z}
\end{align}
hold for all $Q_j<d\le 2Q_j$ but at most $O\left(Q_j/(\log x)^2\right)$ exceptions. From equation (\ref{eq4}) we have
\begin{align*}
\pi(y;d,1)>\frac{\text{li}~y}{2\varphi(d)} \quad \left(z/2<y\le z\right)
\end{align*}
for all $Q_j<d\le 2Q_j$ with at most $O\left(Q_j/(\log x)^2\right)$ exceptions. A little thought with the dichotomy of $z$ between the interval $[x^{3/4},x]$ leads to the fact
\begin{align}\label{eq5}
\pi(y;d,1)>\frac{\text{li}~y}{2\varphi(d)}> \frac{y}{3\varphi(d)\log y} \quad \left(\forall~ x^{3/4}\le y\le x\right)
\end{align}
for all $Q_j<d\le 2Q_j$ except for $O\left(Q_j/\log x\right)$ exceptions. Let $S_j$ be the set of all integers $Q_j<d\le 2Q_j$ such that equation (\ref{eq5}) hold. Thus, from the analysis above and equation (\ref{eq3}), (\ref{eq5}) we conclude that
\begin{align*}
\sum_{n\le x}\omega^*(n)^2&\ge \frac{x}{9}\sum_{0\le j\le \left\lfloor \frac{\log x}{13\log 2}\right\rfloor}\sum_{\substack{d\in S_j}}\varphi(d)\left(\int_{x^{3/4}}^{x}\frac{1}{\varphi(d)t\log t}dt\right)^2+O(x)\nonumber\\
&\gg x\sum_{0\le j\le \left\lfloor \frac{\log x}{13\log 2}\right\rfloor}\sum_{\substack{d\in S_j}}\frac1{\varphi(d)}\gg x\log x.
\end{align*}
\end{proof}
It is worth here mentioning that we have the following corollary
$$\sum_{p,q\le x}\frac{1}{[p-1,q-1]}\asymp  \log x$$
due to (see \cite[page 6, last line]{MM})
$$\sum_{n\le x}\omega^*(n)^2=\sum_{p,q\le x}\frac{x}{[p-1,q-1]}+O(x).$$  
\section*{Acknowledgments}
The author is supported by the Natural Science Foundation of Jiangsu Province of China, Grant No. BK20210784, China Postdoctoral Science Foundation, Grant No. 2022M710121. He is also supported by foundation numbers JSSCBS20211023 and YZLYJF2020PHD051.

\end{document}